\newtheorem{thm}{Theorem}[section]
\newtheorem{lem}[thm]{Lemma}
\newtheorem{prop}[thm]{Proposition}
\theoremstyle{definition}
\theoremstyle{remark}
\newtheorem{rems}[thm]{\textbf{Remarks}}
      \def\@makefnmark{%
         \leavevmode
            \raise.9ex\hbox{\check@mathfonts
                \fontsize\sf@size\z@\normalfont%
                            \@thefnmark}%
       }
\begin{document}

\title[]{Rigidity of Beltrami fields with a non-constant proportionality factor}
\author[]{Ken Abe}
\date{}
\address[K. ABE]{Department of Mathematics, Graduate School of Science, Osaka City University, 3-3-138 Sugimoto, Sumiyoshi-ku Osaka, 558-8585, Japan}
\email{kabe@osaka-cu.ac.jp}

\subjclass[2010]{35Q31, 35Q35}
\keywords{Beltrami fields, Rigidity, Constrained equation}
\date{\today}

\maketitle

\begin{abstract}
We prove that bounded Beltrami fields must be symmetric if a proportionality factor depends on 2 variables in the cylindrical coordinate and admits a regular level set diffeomorphic to a cylinder or a torus. 
\end{abstract}

\section{Introduction}
We consider 3d steady states of ideal incompressible flows

\begin{align*}
(\nabla \times u)\times u+\nabla \pi=0,\quad \nabla \cdot u=0\quad \textrm{in}\ \mathbb{R}^{3},\tag{1.1}
\end{align*}\\
where $u$ is the velocity of fluid and $\pi$ is the Bernoulli pressure. Integral curves of the velocity and the vorticity $\nabla \times u$ are called stream lines and vortex lines, respectively. If the Bernoulli pressure $\pi\nequiv \textrm{const.}$ is regular, they lie on level sets of $\pi$, called Bernoulli surfaces. It is known \cite{AK98} that Bernoulli surfaces are diffeomorphic to nested cylinders or tori. The system (1.1) can be written as an elliptic system with constraints, e.g. \cite[p.34]{Grad85}. Indeed, by introducing the current potential $\eta$ such that $\nabla \times u=\nabla \pi\times \nabla \eta$, (1.1) is formally written in an equivalent form

\begin{equation*}
\begin{aligned}
&\nabla \times u=\nabla \pi\times \nabla \eta,\quad \nabla \cdot u=0,\\
&u\cdot \nabla \pi=0,\quad u\cdot \nabla \eta=1.
\end{aligned}
\end{equation*}\\
The first line is an elliptic system for given $\pi$ and $\eta$. The second line is constraints to them, called a degenerate hyperbolic system. 

The constraints are removed by symmetry, e.g. translation or rotation. In the axisymmetric setting, (1.1) is reduced to the Grad-Shafranov equation \cite{Grad}, \cite{Shafranov}. Existence of solutions with compactly supported vorticity is well known in the study of vortex rings, e.g. \cite{Van13}. Moreover, compactly supported solutions are constructed in \cite{Gav}, \cite{CLV}, \cite{DEPS21}. Existence of smooth non-symmetric solutions to (1.1) with $\pi\nequiv \textrm{const.}$ is unknown.

The non-existence of such non-symmetric solutions is a conjecture of Grad \cite[p.144]{Grad67}, see Constantin et al. \cite[p.529]{CDG21b}. More precisely, symmetries in this conjecture are 4 types: translation, rotation, helix, reflection in a plane. This problem is considered as \textit{rigidity} to (1.1). For the 2d flows, a rigidity result that bounded solutions with no stagnation points must be shear flows is proved by Hamel and Nadirashvili \cite{HN19}. See also rigidity in a strip \cite{HN17} and in a pipe for axisymmetric flows \cite{CDG21b}. The full 3d rigidity to (1.1) with $\pi\nequiv \textrm{const.}$ is unknown, cf. \cite{Shv}. Grad's conjecture is also studied from existence of non-symmetric solutions with piecewise constant pressure \cite{BL96}, \cite{ELP21} and of smooth non-axisymmetric solutions \cite{BKM20}, \cite{BKM20b}, \cite{CDG21}.\\ 

In this paper, we study rigidity of (1.1) with constant pressure $\pi\equiv \textrm{const.}$ Velocity and vorticity with such the pressure are collinear and (1.1) is reduced to the Beltrami equations

\begin{align*}
\nabla \times u=f u,\quad \nabla \cdot u=0\quad \textrm{in}\ \mathbb{R}^{3}.\tag{1.2}
\end{align*}\\
The function $f$ is called a proportionality factor. If $f\equiv \textrm{const.}$, $u$ is called a strong Beltrami field. Vortex lines of strong Beltrami fields can be chaotic  and non-symmetric, e.g. ABC flows \cite{AK98}. Hence (1.2) with $f\equiv \textrm{const}$. is \textit{not} rigid. It is known \cite{EP12}, \cite{EP15} that strong Beltrami fields describe knots and links of vortex lines and vortex tubes. 

If $f\nequiv \textrm{const.}$, vortex lines are confined to a level set $f^{-1}(c)=\{x\in \mathbb{R}^{3}\ |\ f(x)=c  \}$ for $c\in \mathbb{R}$ since $f$ is a first integral, i.e. 

\begin{align*}
u\cdot \nabla f=0.
\end{align*}\\
It is known \cite{AK98} that a closed surface $f^{-1}(c)$ with no singular points $\{u=0\}$ is diffeomorphic to a torus. Existence of solutions to (1.2) is unknown unless $f\equiv \textrm{const}$ or under symmetry, cf. \cite{CK20}. Axisymmetric solutions with compactly supported vorticity exist \cite{Chandra}, \cite{Tu89}, \cite{A8}.

In contrast to (1.1) with $\pi\nequiv \textrm{const.}$, rigidity results are known to (1.2). The first rigidity results to (1.2) are Liouville theorems on decay conditions at space infinity \cite{Na14}, \cite{CC15}, e.g. $u=o(|x|^{-1})$ as $|x|\to\infty$. This decay rate is sharp, cf. \cite{EP12}, \cite{EP15}. The another type Liouville theorem is based on a level set condition for $f \nequiv \textrm{const}$. 

\vspace{5pt}

\begin{thm}[\cite{EP16}]
Suppose that $f\in C^{2+\mu}(\mathbb{R}^{3})$ for some $0<\mu<1$. If $f$ admits a regular level set diffeomorphic to a sphere, then any solutions to (1.2) is identically zero. 
\end{thm}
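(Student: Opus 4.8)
The plan is to show that $u$ vanishes on an entire open neighborhood of the sphere-like level set and then to invoke unique continuation. Write $\Sigma=f^{-1}(c)$ for the regular level set with $\Sigma\cong S^{2}$, and record two consequences of (1.2). Taking the divergence of $\nabla\times u=fu$ gives $\nabla f\cdot u=\nabla\cdot(fu)=\nabla\cdot(\nabla\times u)=0$, so $u$ is everywhere tangent to the level sets of $f$; taking the curl gives the elliptic system $-\Delta u=\nabla f\times u+f^{2}u$, whose locally bounded lower-order coefficients (recall $f\in C^{2+\mu}$) make $u$ smooth enough for the computations below and, crucially, bring unique continuation into play.

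First I would transfer the problem to the surface $\Sigma$. Let $\alpha=u^{\flat}$ be the $1$-form dual to $u$ and $i\colon\Sigma\hookrightarrow\mathbb{R}^{3}$ the inclusion. Since $d\alpha=\iota_{\nabla\times u}\,\mathrm{vol}=f\,\iota_{u}\,\mathrm{vol}$ and since $u$ is tangent to the two-dimensional $\Sigma$, the restriction $i^{*}(\iota_{u}\,\mathrm{vol})$ vanishes (it evaluates $\mathrm{vol}$ on three tangent vectors). Hence $d(i^{*}\alpha)=i^{*}d\alpha=0$, i.e. the tangential part of $u$ is a closed $1$-form on $\Sigma$. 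This is exactly where the topology enters: because $\Sigma\cong S^{2}$ has $H^{1}(\Sigma;\mathbb{R})=0$, the closed form $i^{*}\alpha$ is exact, so $i^{*}\alpha=dg$ and $u^{\mathrm{tan}}=\nabla_{\Sigma}g$ for some function $g$ on $\Sigma$. For a torus one has $H^{1}\neq0$ and this conclusion fails, which is why the sphere hypothesis is essential.

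Next I would bring in incompressibility to pin down $g$. Choosing coordinates $(f,y^{1},y^{2})$ adapted to the foliation by level sets near $\Sigma$, the condition $\nabla\cdot u=0$ together with tangency reads $\partial_{1}(\rho u^{1})+\partial_{2}(\rho u^{2})=0$, where $\rho>0$ is the volume Jacobian; equivalently $u$ is divergence-free on $\Sigma$ for a weighted area form $\mu=w\,\sigma$ with $w>0$, where $\sigma$ is the induced area form. Feeding $u^{\mathrm{tan}}=\nabla_{\Sigma}g$ into $\operatorname{div}_{\mu}u=0$ yields the weighted Laplace equation $\operatorname{div}_{\Sigma}(w\,\nabla_{\Sigma}g)=0$ on the closed surface $\Sigma$. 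Pairing with $g$ and integrating by parts gives $\int_{\Sigma}w\,|\nabla_{\Sigma}g|^{2}\,\sigma=0$, so $g$ is constant and $u^{\mathrm{tan}}\equiv0$; since $u$ is tangent to $\Sigma$ and hence has no normal component, this forces $u\equiv0$ on $\Sigma$.

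Finally, I would propagate this to all of $\mathbb{R}^{3}$. As $c$ is a regular value and $\Sigma$ is compact, $\nabla f\neq0$ on a tubular neighborhood $U$ of $\Sigma$, and the nearby level sets $f^{-1}(c')$ foliate $U$ and are all diffeomorphic to $\Sigma$, hence to $S^{2}$. Running the previous two paragraphs on each leaf shows $u\equiv0$ on the open set $U$. Since $u$ solves the elliptic system $-\Delta u=\nabla f\times u+f^{2}u$ with locally bounded coefficients, Aronszajn's unique continuation principle then forces $u\equiv0$ on the connected space $\mathbb{R}^{3}$. I expect the main obstacle to be the surface argument of the middle two paragraphs—specifically, combining the cohomological vanishing (exactness of $u^{\mathrm{tan}}$) with the weighted incompressibility into a single elliptic identity on $\Sigma$—together with checking that unique continuation applies to the coupled system; the topological input $H^{1}(S^{2})=0$ is precisely what makes the sphere case rigid while the torus case is not.
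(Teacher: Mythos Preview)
Your proposal is correct and follows essentially the same approach as the paper's sketch: closedness of the pulled-back $1$-form on the level surface, exactness from $H^{1}(S^{2})=0$, a weighted elliptic equation on the surface, integration by parts on the closed surface to force the potential to be constant, and finally unique continuation via $-\Delta u=\nabla f\times u+f^{2}u$. The only cosmetic difference is that the paper packages the surface equations through the constrained-evolution formalism (1.3)--(1.4), obtaining $d(\chi*_{t}d\psi)=0$ with weight $\chi=|\nabla f|^{-1}$, whereas you write the weighted divergence equation directly from $\nabla\cdot u=0$ in coordinates adapted to the foliation; the content is the same.
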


\vspace{5pt}

This Liouville theorem implies non-existence to (1.2) for a broad class of $f$, e.g. radial or having extrema. On the other hand, it implies a certain relation between existence and symmetry of $f$ since symmetric $f$ does not take extrema. The relation between existence and symmetry is indeed Grad's conjecture to (1.1) with $\pi\nequiv\textrm{const.}$

For symmetric $f$ depending on 1 variable in the cylindrical coordinate $(r,\theta,z)$, i.e. $x_1=r\cos\theta$, $x_2=r\sin\theta$, $x_3=z$, any bounded solutions of (1.2) are symmetric, and even for such $f$, solutions may not exist \cite{GPS01}, \cite[Section 5]{CK20}. More precisely, the following 3 cases are known.\\

\noindent
(i) For $f=f(z)$, level sets are planes. Any solutions of (1.2) are harmonic on them and singular points are isolated. In particular, bounded solutions are symmetric, i.e. $u=u(z)$.

\noindent
(ii) For $f=f(r)$, level sets are cylinders. Any solutions of (1.2) are constant on them and axisymmetric. 

\noindent
(iii) For $f=f(\theta)$, level sets are half planes. Any solutions of (1.2) are trivial.\\

This rigidity follows by investigating \textit{compatibility} of a constrained evolution equation equivalent to (1.2), see Remarks 2.1. The constraint in (1.2) is understood as compatibility for the constrained evolution equation \cite{EP16} and studied via an exterior differential system by using Cartan's method \cite{CK20}.

For $f$ depending on 2 variables in $(r,\theta,z)$, a variety of surfaces appear as level sets of $f$ such as cylindrical surfaces for $f=f(r,\theta)$, surfaces of revolution for $f=f(r,z)$ and right conoids for $f=f(\theta,z)$. For $f=f(r,\theta)$ and $f=f(r,z)$ having extrema on $(x_1,x_2)$ or $(r,z)$-planes, their level sets admit a cylinder or a torus, cf. \cite{EP16}.

If only 4 types of symmetry are admitted to (1.2) with $f\nequiv \textrm{const.}$, $f=f(r,\theta)$ and $f=f(r,z)$ are 2 types among them. We establish a rigidity result for these $f$. Since translationally and rotationally symmetric solutions exist, divisions of these 2 cases must appear in rigidity to (1.2).

\vspace{5pt}

\begin{thm}
Suppose that $f\in C^{2+\mu}(\mathbb{R}^{3})$ for some $0<\mu<1$. \\
\noindent
(i) If $f=f(r,\theta)$ admits a regular level set diffeomorphic to a cylinder, then any bounded solutions to (1.2) are translationally symmetric. 

\noindent
(ii) If $f=f(r,z)$ admits a regular level set diffeomorphic to a torus, then any solutions to (1.2) are rotationally symmetric. 
\end{thm}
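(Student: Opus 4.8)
My plan is to reduce both statements to the vanishing of a single auxiliary field obtained by differentiating $u$ along the candidate symmetry, and then to kill that field using the prescribed topology of the level set. Let $X$ be the Killing field generating the symmetry in question: $X=\partial_z$ (translations along the axis) in case (i), and $X=\partial_\theta=-x_2\partial_{x_1}+x_1\partial_{x_2}$ (rotations) in case (ii). Since $f=f(r,\theta)$ gives $Xf=\partial_z f=0$ and $f=f(r,z)$ gives $Xf=\partial_\theta f=0$, in both cases $f$ is invariant under the flow of $X$. Put $w=\mathcal{L}_X u$, the derivative of $u$ along this one‑parameter symmetry group. Because curl and divergence commute with the isometries generated by $X$, and $\mathcal{L}_X(fu)=(Xf)u+f\mathcal{L}_X u=fw$, the field $w$ again solves (1.2) with the \emph{same} factor $f$; differentiating $u\cdot\nabla f=0$ along $X$ gives $w\cdot\nabla f=0$, so $w$ is tangent to the same level sets. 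As $u$ is $X$‑symmetric exactly when $w\equiv 0$, both assertions reduce to proving $w\equiv 0$.

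The next step is the surface structure on a regular level set $\Sigma=f^{-1}(c)$ with unit normal $N=\nabla f/|\nabla f|$. Writing $u^\flat$ for the $1$-form dual to $u$, one has $du^\flat=\iota_{\nabla\times u}\,\mathrm{vol}=f\,\iota_u\mathrm{vol}$, and its pullback to $\Sigma$ equals $f(u\cdot N)\,dA_\Sigma=0$ since $u\cdot N=0$; hence the tangential restriction $u_T$ is a closed $1$-form, and likewise $w_T$. On a sphere $H^1=0$, so a closed tangential field is exact and, together with the remaining equations, is forced to vanish — this is the mechanism behind Theorem 1.1. For a cylinder or a torus $H^1\neq 0$, so $u_T$ may carry nonzero periods $\oint_\gamma u\cdot d\ell$ around the generating cycle(s) $\gamma$, and these surviving periods are precisely the room left for a translationally or rotationally symmetric field; this is why the conclusion here is symmetry rather than triviality. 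The key point is that the periods of $w_T$ nonetheless vanish: moving the integration cycle by the symmetry flow $\phi_t$ keeps it in the same homology class of $\Sigma$ (as $\phi_t$ is isotopic to the identity), and since $u_T$ is closed the period $\oint_{\phi_t\gamma}u\cdot d\ell$ is independent of $t$; its $t$-derivative $\oint_\gamma w_T$ therefore vanishes.

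With all periods of $w_T$ equal to zero, $w_T$ is exact on $\Sigma$, say $w_T=\nabla_\Sigma\psi$, and the condition $\nabla\cdot w=0$ (applied across the regular foliation by the nearby level sets, which stay diffeomorphic to cylinders or tori by the implicit function theorem) forces $\psi$ to satisfy a degenerate elliptic equation on the leaves. In case (ii) the torus $\Sigma$ bounds a compact solid torus $\mathcal{T}$, so $\psi$ is harmonic on a compact manifold and hence constant, giving $w_T=0$ on each leaf with \emph{no} boundedness hypothesis; combined with tangency this yields $w=0$ on $\mathcal{T}$, and unique continuation for the elliptic system $\Delta w+f^2w=w\times\nabla f$ (obtained by taking the curl of (1.2)) gives $w\equiv 0$. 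In case (i) the cylinder is non‑compact in $z$, and this is exactly where boundedness is needed: interior elliptic estimates with $z$-independent coefficients bound $w=\partial_z u$ in terms of $u$, and a bounded harmonic function on the infinite cylinder $S^1\times\mathbb{R}$ is constant, so again $w_T=0$, $w=0$ near $\Sigma$, and $w\equiv 0$ by unique continuation.

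The main obstacle I anticipate is the middle of the third step: passing rigorously from the three‑dimensional constraint $\nabla\cdot w=0$ to the scalar equation controlling the potential $\psi$ on the (possibly non‑compact) level set, since the naive surface divergence of $w_T$ picks up the normal derivative of $w\cdot N$, which does not obviously vanish. This is precisely the \emph{compatibility} of the constrained evolution across the foliation in the sense of Remarks 2.1, and organizing the period analysis and the potential equation into that framework — together with the unique‑continuation globalization from a neighborhood of $\Sigma$ to all of $\mathbb{R}^3$ — is where the real work lies. The contrast between the compact confinement by a torus and the non‑compact confinement by a cylinder is what dictates the two different symmetry conclusions and the need for the boundedness hypothesis only in case (i).
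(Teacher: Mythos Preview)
Your approach is essentially the paper's, recast in invariant language. The paper parametrizes each regular level set so that $\xi_2$ is the symmetric variable ($z$ or $\theta$), pulls $u^\flat$ back to a closed $1$-form $\beta=\beta_1\,d\xi_1+\beta_2\,d\xi_2$, and records the divergence-free condition as the weighted surface equation $d(\chi*_t\beta)=0$; since the induced metric and $\chi$ are $\xi_2$-independent, differentiating in $\xi_2$ and using $\partial_{\xi_2}\beta=d\beta_2$ (from closedness of $\beta$) yields directly the scalar equation $d(\chi*_t d\beta_2)=0$, after which the Liouville theorem on the cylinder (or integration by parts on the torus) gives $\beta_2=\beta_2(t)$, $\beta_1=\beta_1(\xi_1,t)$, i.e.\ local symmetry, and unique continuation globalizes.

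Your potential $\psi$ \emph{is} this $\beta_2$: Cartan's formula gives $w^\flat=\mathcal{L}_X u^\flat=d(u\cdot X)+\iota_X\,du^\flat$, and $\iota_X\,du^\flat=f\,\iota_X\iota_u\mathrm{vol}$ pulls back to zero on $\Sigma$ because $u$ and $X$ are both tangent there; hence $w_T^\flat=d(u\cdot X)|_\Sigma=d\beta_2$. This identification dissolves the obstacle you anticipated---the elliptic equation for $\psi$ is exactly the one above, read off from the divergence-free constraint on $u$ rather than by chasing $\partial_N(w\cdot N)$---and it also closes a gap you did not flag: on the non-compact cylinder you need $\psi$ itself bounded to invoke the Liouville theorem, and a bounded exact $1$-form on $S^1\times\mathbb{R}$ need not have a bounded primitive (consider $d\xi_2$); but $\psi=u\cdot X=u^3$ is bounded because $u$ is. With this said, your period argument becomes redundant and the remainder of your outline coincides with the paper's proof.
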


\begin{rems}
(i) Translationally symmetric solutions of (1.2) form

\begin{align*}
u=\partial_2\Psi e_{1}-\partial_1\Psi e_{2}+u^{3}(\Psi)e_3,\quad f=\dot{u}^{3}(\Psi), 
\end{align*}\\
for some $u^{3}(\cdot)$ and the stream function $\Psi(x_1,x_2)$ satisfying $-\Delta \Psi=\dot{u}^{3}(\Psi){u}^{3}(\Psi)$, where $e_1,e_2,e_3$ is the orthogonal basis in the Cartesian coordinate. 

\noindent
(ii) Rotationally symmetric (axisymmetric) solutions of (1.2) form

\begin{align*}
u=-r^{-1}\partial_z\Psi e_r+r^{-1}\Gamma(\Psi)e_{\theta}+r^{-1}\partial_r\Psi e_z,\quad f=\dot{\Gamma}(\Psi),  
\end{align*}\\
for some $\Gamma(\cdot)$ and $\Psi(r,z)$ satisfying $-(\Delta_{z,r}-r^{-1}\partial_r)\Psi=\dot{\Gamma}(\Psi)\Gamma(\Psi)$, where $e_r={}^{t}(\cos\theta,\sin\theta,0)$, $e_{\theta}={}^{t}(-\sin\theta,\cos\theta,0)$, $e_z={}^{t}(0,0,1)$ are the orthogonal basis in the cylindrical coordinate. These elliptic problems appear as free boundary problems for translating vortex pairs and vortex rings, see Section 4. Under helical symmetry, (1.1) and (1.2) are reduced to the helical Grad-Shafranov equation \cite[p.196]{Freidberg}. 
\end{rems}

\vspace{10pt}
The proof of Theorem 1.2 is based on the facts that (1.2) can be recasted as a constrained evolution equation on a level set of $f$ \cite{EP16}, \cite{CK20} and that Beltrami fields are solutions to the elliptic equation $-\Delta u=\nabla f\times u+f^{2}u$ as explained below. Unfortunately, solutions of (1.1) with $\pi\nequiv \textrm{const.}$ do not possess neither of them and their rigidity is out of reach. Rigidity of (1.1) with $\pi\nequiv \textrm{const.}$ is unknown even for $\pi$ depending on 1 variable in $(r,\theta,z)$. A crucial difference is failure of a unique continuation property to (1.1) with $\pi\nequiv \textrm{const.}$ as compactly supported solutions exist \cite{Gav}, \cite{CLV}, \cite{DEPS21}. \\

We outline the proof of Theorem 1.2. We show that the symmetric-directional component of $u$ ($u^{3}$ or $\Gamma$) is constant on a level set of $f$. This property can be observed from the above form of symmetric solutions. Since $f$, $u^{3}$ and $\Gamma$ are functions of $\Psi$, $u^{3}$ and $\Gamma$ are constants on a level set of $f$. Moreover, other two components of $u$ are independent of the symmetric variable ($z$ or $\theta$).

To prove this, without assuming symmetry of $u$, we use differential forms and rewrite (1.2) as a constrained evolution equation \cite{EP16}, cf. \cite{CK20},

\begin{align*}
\beta_t=-(c+t)\chi*_t\beta,\quad d \beta=0,  \tag{1.3}
\end{align*}\\
for $\chi=|\nabla f|^{-1}$ and a dual 1-form $\beta$ of $u$ on the surface $f^{-1}(c+t)$, where $d$ denotes the exterior derivative and $*_t$ denotes the Hodge star operator on the surface. By parametrizing the surface by $\xi={}^{t}(\xi_1,\xi_2)$ and $t\geq 0$, this 1-form is written as 

\begin{align*}
\beta=\beta_{1}(\xi,t)d\xi_1+\beta_{2}(\xi,t)d\xi_2.
\end{align*}\\
The constrained equation (1.3) is equivalent to (1.2) for $f\nequiv \textrm{const.}$ and implies that the 1-form satisfies the elliptic equation on the surface

\begin{align*}
d(\chi*_{t}\beta)=0,\quad d\beta=0.   \tag{1.4}
\end{align*}\\
If the surface $f^{-1}(c+t)$ is diffeomorphic to a sphere, $\beta$ is an exact form, i.e. $\beta= d\psi$. By the elliptic equation of the divergence form

\begin{align*}
d(\chi*_{t}d \psi)=0,   
\end{align*}\\
and integration by parts, $\psi$ is constant on the surface. Thus $u$ vanishes in a neighborhood of the regular level set and in $\mathbb{R}^{3}$ by unique continuation. 

If the surface is not diffeomorphic to a sphere, the problem (1.4) admits non-trivial solutions and does not imply non-existence to (1.2) for $f\nequiv \textrm{const.}$ A new observation of the present work is that if $f=f(r,\theta)$ or $f=f(r,z)$, each component of $\beta$ can be constant for the symmetric variable. For cylindrical surfaces and surfaces of revolution, we denote the symmetric variable by $\xi_2$, i.e. $\xi_2=z$ or $\xi_2=\theta$. Then the symmetric-directional component $\beta_2$ satisfies 

\begin{align*}
d(\chi*_{t}d \beta_{2})=0.     \tag{1.5}
\end{align*}\\
We show that $\beta_{2}=\beta_{2}(t)$ and $\beta_{1}=\beta_{1}(\xi_1,t)$ if the surface $f^{-1}(c+t)$ is diffeomorphic to a cylinder or a torus. If the surface is diffeomorphic to a torus, this follows from integration by parts. For a cylinder, we apply a Liouville theorem for bounded solutions to the elliptic equation (1.5). The property $\beta_{2}=\beta_{2}(t)$ and $\beta_{1}=\beta_{1}(\xi_1,t)$ implies local symmetry of $u$ and global symmetry follows from unique continuation. 

It is possible to extend this approach for level sets diffeomorpic to other cylindrical surfaces and surfaces of revolution. On the other hand, rigidity of (1.2) for $f=f(\theta,z)$ is unknown unless $f=f(z)$ and $f=f(\theta)$. The equation (1.5) for $f=f(\theta,z)$ is written with the metric tensor of the surface in the same form as those of $f=f(r,\theta)$ and $f=f(r,z)$ though dependence on the parameter $\xi={}^{t}(\xi_1,\xi_2)$ is different. We also address the constrained equation (1.3) for $f=f(\theta,z)$.

%\vspace{15pt}

\section{The elliptic equation for $\beta_{2}$}

%\vspace{5pt}

We derive the equation (1.3) by parametrizing the surface $f^{-1}(c+t)$ by $\xi$. The elliptic equation (1.4) has an explicit form with $\xi$ and is written in a simpler form for $f$ depending on 2 variables in $(r,\theta,z)$. We show that $\beta_{2}$ satisfies the equation (1.5) for $f=f(r,\theta)$ and $f=f(r,z)$.

%\vspace{5pt}

\subsection{The constrained equation}

We assume that a level set $f^{-1}(c)$ for $c\in \mathbb{R}$ is regular in the sense that $f^{-1}(c+t)$ is a smooth surface for $0\leq t\leq t_0$ with some $t_0 >0$ and $\nabla f(x)\neq 0$ for $x\in f^{-1}(c+t)$. We parametrize the surface $f^{-1}(c)$ by $x=\Phi_0(\xi)$ with $\xi={}^{t}(\xi_1,\xi_2)$ and define $\Phi(\xi, t)$ by the flow of $X=\nabla f /|\nabla f|^{2}$, i.e. 

\begin{align*}
&\partial_t \Phi=X(\Phi),\quad t>0, \\
&\Phi(\xi,0)=\Phi_0(\xi).   
\end{align*}\\
The flow $\Phi(\xi,t)$ parametrizes the surface $f^{-1}(c+t)$, i.e. $\Phi(\xi,t)\in f^{-1}(c+t)$. Since $f\in C^{2+\mu}$ for some $0<\mu<1$, $\Phi(\xi,t)$ is $C^{2+\mu}$. We may assume that $\Phi(\cdot,t)$ is defined for $0\leq t\leq t_0$. The equations (1.2) for the dual 1-form $\alpha=\sum_{i=1}^{3}u^{i}dx_i$ of $u=(u^{i})$ are

\begin{align*}
d_{\mathbb{R}^{3}}\alpha=f*_{\mathbb{R}^{3}}\alpha,\quad d_{\mathbb{R}^{3}}*_{\mathbb{R}^{3}}\alpha=0,   \tag{2.1}
\end{align*}\\
where $d_{\mathbb{R}^{3}}$ and $*_{\mathbb{R}^{3}}$ are the exterior derivative and the Hodge star operator in $\mathbb{R}^{3}$, respectively. By the elliptic equation $-\Delta u=\nabla f\times u+f^{2}u$ and $f\in C^{2+\mu}$, $u$ and $\alpha$ are $C^{3+\mu}$. The pullback $\beta=\Phi^{*}\alpha$ by the map $\Phi: (\xi,t)\longmapsto x=\Phi(\xi,t)$ satisfies 

\begin{align*}
d_{\mathbb{R}^{3}}\beta=(c+t)*_{\mathbb{R}^{3}}\beta,\quad d_{\mathbb{R}^{3}}*_{\mathbb{R}^{3}}\beta=0,   \tag{2.2}
\end{align*}\\
With the matrices $F=(\partial_1\Phi,\partial_2\Phi,\partial_t\Phi)$ and $\tilde{F}=|F|F^{-1}$, 

\begin{align*}
\beta&=(u^{1},u^{2},u^{3})F\ {}^{t}(d\xi_1,d\xi_2,dt), \\
*_{\mathbb{R}^{3}} \beta&=(u^{1},u^{2},u^{3}){}^{t}\tilde{F}\ {}^{t}(d\xi_2\wedge dt,dt\wedge d\xi_1,d\xi_1\wedge d\xi_2),
\end{align*}\\
where $|F|$ denotes the determinant of $F$. Since $u\cdot \partial_t\Phi=0$, the pullback $\beta$ is a 1-form on a surface and $C^{1+\mu}$, 

\begin{align*}
\beta
=u(\Phi(\xi,t))\cdot \partial_{1}\Phi d \xi_1+u(\Phi(\xi,t))\cdot \partial_{2}\Phi d \xi_2
=:\beta_1(\xi,t)d\xi_1+\beta_2(\xi,t)d\xi_2.
\end{align*}\\
We write the metric tensor by ${\mathcal{G}}=(\partial_{i}\Phi\cdot \partial_{j}\Phi)_{1\leq i,j\leq 2}$ and $\mathcal{G}^{-1}=(g^{ij})_{1\leq i,j\leq 2}$. Since

\begin{align*}
{}^{t}FF=\left(
\begin{matrix}
{\mathcal{G}} & 0 \\
0& \chi^{2}
\end{matrix}
\right),\quad \chi=|\nabla f|^{-1},
\end{align*}\\
and $(u^{1},u^{2}, u^{3} )=(\beta_1,\beta_2,0)F^{-1}$, the Hodge dual in $\mathbb{R}^{3}$ is 

\begin{align*}
*_{\mathbb{R}^{3}}\ \beta
=\chi |{\mathcal{G}}|^{1/2} ( (\beta_1g^{11}+\beta_2g^{21} )d\xi_2\wedge dt+(\beta_1g^{12}+\beta_2g^{22} ) dt\wedge d\xi_1  ).
\end{align*}\\
Then the equations (2.2) imply

\begin{align*}
&\partial_1\beta_{2}-\partial_2\beta_{1}=0,\\
&\partial_t \beta_{1}=(c+t)\chi|{\mathcal{G}}|^{1/2} (\beta_1g^{12}+\beta_2g^{22}),\\
&\partial_t \beta_{2}=-(c+t)\chi|{\mathcal{G}}|^{1/2} (\beta_1g^{11}+\beta_2g^{21}),\\
&\partial_1(\chi |{\mathcal{G}}|^{1/2} (\beta_1g^{11}+\beta_2g^{21}) )+\partial_2(\chi |{\mathcal{G}}|^{1/2} (\beta_1g^{12}+\beta_2g^{22}) )=0.
\end{align*}\\
The last equation follows from the first 3 equations. They can be written as 

\begin{align*}
v_t=Av,\quad \nabla^{\perp}\cdot v=0,  \tag{2.3}
\end{align*}\\
for $v={}^{t}(v^{1},v^{2})$, $v^{i}=\beta_{i}$ with the matrix 

\begin{align*}
A=(c+t)\chi | {\mathcal{G}} |^{1/2}
\left(
    \begin{array}{cc}
      g^{12} & g^{22} \\
      -g^{11} & -g^{21}  
    \end{array}
  \right),
\end{align*}\\
where $\nabla={}^{t}(\partial_{\xi_1},\partial_{\xi_2})$ and $\nabla^{\perp}={}^{t}(\partial_{\xi_2}, -\partial_{\xi_1})$. Taking the rotation implies that $v$ satisfies the elliptic equation $\nabla^{\perp}\cdot(Av)=0$ and $\nabla^{\perp}\cdot v=0$. With the Hodge star operator on the surface, 

\begin{align*}
*_{t}\ \beta=(\beta_1,\beta_2)|{\mathcal{G}}|^{1/2} {\mathcal{G}}^{-1}\ {}^{t}(d\xi_2,-d\xi_1),
\end{align*}\\
(2.3) is written as (1.3), i.e. 

\begin{align*}
\beta_t=-(c+t)\chi*_{t} \beta,\quad d \beta=0.   
\end{align*}\\
The elliptic equation (1.4) follows by differentiating the first equation by $d$. The system (2.3) is overdetermined in the sense that the irrotational condition is generally \textit{not} compatible with the evolution equation. Regarding (1.2) as a constrained evolution equation originates from \cite{EP12} in which Cauchy-Kowalevski theorem is used to construct strong Beltrami fields for given initial surface and tangential data. 

Clelland and Klotz \cite{CK20} derived a similar evolution equation as (2.3) by using a moving frame and studied (1.2) in terms of an integral manifold to an equivalent exterior differential system by using the Cartan's method. Among other results, they showed that associated integral manifolds are at most 3-dimensional if level sets of $f$ have no umbilic points.

%\vspace{15pt}

\subsection{Symmetric $f$}

The matrix $A$ has a simpler form for $f$ depending on 2 variables in $(r,\theta,z)$. \\

\noindent
(i) $f=f(r,\theta)$. We parametrize a curve in a plane by $(r(\xi_1,t), \theta(\xi_1,t))$, i.e. $x_1=r\cos\theta$ $x_2=r\sin\theta$. Then, the map

\begin{align*}
\Phi(\xi,t)=re_{r}(\theta)+ze_z    \tag{2.4}
\end{align*}\\
for $(r,\theta,z)=(r(\xi_1,t), \theta(\xi_1,t),\xi_2)$, parametrizes a cylindrical surface. The matrix $A$ forms

\begin{align*}
A=(c+t)\chi \left(
    \begin{array}{cc}
      0 &\nu\\
      -1/\nu & 0  
    \end{array}
  \right)  \tag{2.5}
\end{align*}\\ 
for 

\begin{align*}
\chi=\sqrt{|\partial_tr|^{2}+r^{2}|\partial_t\theta|^{2}},\quad  \nu=\sqrt{|\partial_1r|^{2}+r^{2}|\partial_1\theta|^{2}}.
\end{align*}\\
(ii) $f=f(r,z)$. We parametrize a curve in the $(r,z)$-plane by $(r(\xi_1,t), z(\xi_1,t))$. Then the map (2.4) for $(r,\theta,z)=(r(\xi_1,t),\xi_2,z(\xi_1,t))$, parametrizes a surface of revolution. The matrix $A$ is the same form as (2.5) with different coefficients

\begin{align*}
\chi=\sqrt{|\partial_tr|^{2}+|\partial_tz|^{2}},\quad \nu=\sqrt{{|\partial_1r|^2+|\partial_1z|^2 }}/r.
\end{align*}\\
(iii) $f=f(\theta,z)$. We parametrize a curve in the $(\theta,z)$-plane by $(\theta(\xi_1,t), z(\xi_1,t))$. The map (2.4) for $(r,\theta,z)=(\xi_2,\theta(\xi_1,t), z(\xi_1,t))$ parametrizes a right conoid. The matrix $A$ is the same form as (2.5) with different coefficients

\begin{align*}
\chi=\sqrt{ r^{2}|\partial_t\theta|^{2}+|\partial_tz|^{2}},\quad \nu=\sqrt{r^{2}|\partial_1\theta|^{2}+|\partial_1 z|^{2}}.
\end{align*}\\
In all the cases (i)-(iii), the elliptic problem for $v$ is written as 

\begin{align*}
\nabla\cdot(Bv)=0,\quad \nabla^{\perp}\cdot v=0   \tag{2.6}
\end{align*}\\
with the matrix

\begin{align*}
B=\left(
    \begin{array}{cc}
      p &0\\
      0 & q  
    \end{array}
  \right),\quad p=\chi/\nu,\ q=\chi\nu.
\end{align*}\\
In the cases (i) and (ii), $B$ is independent of $\xi_2$. Hence $\partial_2v=\nabla v^{2}$ and 

\begin{align*}
\nabla \cdot(B\nabla v^{2})=0.
 \tag{2.7}
\end{align*}\\
This is written as $d(\chi*_td \beta_{2})=0$ in terms of the differential form.

If a level set of $f=f(r,\theta)$ is diffeomorphic to a cylinder, the curve $x_1=r(\xi_1,t)\cos\theta(\xi_1,t)$, $x_2=r(\xi_1,t)\sin\theta(\xi_1,t)$ is diffeomorphic to a circle. We suppose $0\leq \xi_1\leq 2\pi$, $\xi_2\in \mathbb{R}$ and regard $v^{2}$ as a periodic solution to (2.7) in $\mathbb{R}^{2}$. Since the level set $f^{-1}(c)$ is regular, $\nabla f\neq 0$ and $\partial_1\Phi\neq 0$ for $0\leq t\leq t_0$ and $\xi_1\in \mathbb{R}$. Thus, $\chi=|\nabla f|^{-1}$ and $\nu=|\partial_1 \Phi|$ are bounded from above and below by positive constants. We take some $\lambda(t)$ and $\Lambda(t)$ such that 

\begin{align*}
0<\lambda(t)\leq p(\xi_1,t),q(\xi_1,t)\leq \Lambda(t),\quad \xi_1\in \mathbb{R},\ 0\leq t\leq t_0.  \tag{2.8}
\end{align*}

\vspace{5pt}

\begin{rems}
\noindent
(i) For $f=f(z)$, (1.2) is written as a constrained evolution equation without using the cylindrical coordinate. Solutions for such $f$ are $u={}^{t}(v,0)$ and $v={}^{t}(v^1,v^2)$ satisfying

\begin{align*}
\partial_3v=-fv^{\perp},\quad \nabla \cdot v=0,\quad \nabla^{\perp}\cdot v=0,
\end{align*}\\
for $v^{\perp}={}^{t}(-v^{2},v^{1})$. This evolution equation is \textit{compatible} with the elliptic constraints. Thus for a given harmonic vector field $v(x_1,x_2,0)$ on $\{x_3=0\}$, one can construct non-symmetric solutions to (1.2) for $f=f(z)$. If $u$ is bounded, $v$ must be constant in $\mathbb{R}^{2}$. Hence any bounded solutions are symmetric, i.e. $u=u^{1}(x_3)e_1+u^{2}(x_3)e_2$.

(ii) For $f=f(r)$, any solutions of (1.2) are axisymmetric due to the compatibility of (2.3). Indeed, we take $r=r(t)$ satisfying $df(r(t))/dt=1$ and $\theta=\xi_1$, $z=\xi_2$. Then, the function $\Phi$ in (2.4) parametrizes a cylinder and $v$ satisfies (2.6) for $B=B(t)$. If $p\equiv 1$, differentiating $\nabla \cdot (Bv)=0$ by $t$ implies 

\begin{align*}
0=\nabla \cdot ( \partial_t B v+B\partial_tv )
=\nabla \cdot ( \partial_t B v )
=\partial_tq \partial_2v^{2}.
\end{align*}\\
By $\nabla \cdot (Bv)=0$, $\partial_i v^{i}=0$ for $i=1,2$. By differentiating each components of $\partial_t v=Av$ by $\xi_1$ and $\xi_2$, $v=v(t)$ follows. If $p\nequiv 1$, applying the same argument to $\nabla\cdot (p^{-1}Bv)=0$ yields $v=v(t)$. Thus, $u=u^{\theta}(r)e_{\theta}+u^{z}(r)e_{z}$. 

\noindent
(iii) For $f=f(\theta)$, no solutions exist to (1.2) due to the \textit{incompatibility} of (2.3). Indeed, we take $\theta=\theta(t)$ satisfying $df(\theta(t))/dt=1$ and $r=\xi_2$, $z=\xi_1$. Then, the function $\Phi$ in (2.4) parametrizes a half plane. The first equation of (2.6) is $\nabla \cdot (\xi_2 v)=0$. By differentiating this by $t$, $\nabla \cdot (\xi_2^{2} v^{\perp})=0$ and $v=0$ follows. Thus, $u\equiv 0$.
\end{rems}

\section{The Liouville theorem}

%\vspace{5pt}

\subsection{Local symmetry}

%\vspace{5pt}

If a level set of $f=f(r,z)$ is diffeomorphic to a torus, $d(\chi*_td \beta
_{2})=0$ and integration by parts yield $\beta_{1}=\beta_{1}(\xi_1,t)$ and $\beta_{2}=\beta_{2}(t)$ by the compactness of the level set. We assume the boundedness of $u$ and prove this property when the level set of $f=f(r,\theta)$ is diffeomorphic to a cylinder. 

\vspace{5pt}

\begin{prop}
$\beta_{1}=\beta_{1}(\xi_1,t)$, $\beta_{2}=\beta_{2}(t)$,\ $\xi_1\in \mathbb{R}$,\ $0\leq t\leq t_0$.
\end{prop}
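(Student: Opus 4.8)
The plan is to reduce the proposition to a Liouville theorem for the scalar divergence-form equation (2.7) on the non-compact cylinder and then to recover the structure of $\beta_1$ from the closedness relation $d\beta=0$. First I record that in the cylindrical parametrization (2.4) one has $\partial_2\Phi=e_z$, so that
\[
\beta_2=u(\Phi(\xi,t))\cdot\partial_2\Phi=u^3(\Phi(\xi,t))
\]
is bounded by $\|u\|_{L^\infty}$ on the whole parameter strip $0\le\xi_1\le 2\pi$, $\xi_2\in\mathbb{R}$, $0\le t\le t_0$. For each fixed $t$, $v^2=\beta_2$ solves $\nabla\cdot(B\nabla v^2)=0$ of (2.7), whose coefficient $B=\mathrm{diag}(p,q)$ is independent of $\xi_2$ and, by (2.8), uniformly elliptic with $0<\lambda(t)\le p,q\le\Lambda(t)$. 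Extending $v^2$ and the coefficients $2\pi$-periodically in $\xi_1$ makes $v^2$ a bounded entire solution of a uniformly elliptic equation on all of $\mathbb{R}^2$.

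The heart of the argument, and the step I expect to be the main obstacle, is the Liouville property: such a bounded entire solution must be constant. I would invoke the interior H\"older estimate of De Giorgi--Nash--Moser, which gives, for solutions on $B_R$,
\[
[v^2]_{C^{\alpha}(B_{R/2})}\le \frac{C}{R^{\alpha}}\,\|v^2\|_{L^\infty(\mathbb{R}^2)},
\]
with $\alpha\in(0,1)$ and $C$ depending only on the ellipticity ratio $\Lambda(t)/\lambda(t)$. Letting $R\to\infty$ forces the H\"older seminorm of $v^2$ to vanish on every ball, so $v^2$ is constant on each slice $\{t=\mathrm{const}\}$; that is, $\beta_2=\beta_2(t)$. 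What makes this work is that the ellipticity bounds (2.8) hold with constants $\lambda(t),\Lambda(t)$ independent of $\xi_1$ --- a consequence of the regularity of the level set ($\nabla f\neq 0$ and $\partial_1\Phi\neq 0$ for $0\le t\le t_0$) --- so that $C$ does not deteriorate as $|\xi|\to\infty$.

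Finally, I recover the $\xi_2$-independence of $\beta_1$ from the closedness equation $d\beta=0$, whose scalar form is the first equation $\partial_1\beta_2-\partial_2\beta_1=0$ of the system in Section 2.1. Since $\beta_2=\beta_2(t)$, we have $\partial_1\beta_2=0$, hence $\partial_2\beta_1=0$ and therefore $\beta_1=\beta_1(\xi_1,t)$. This gives the asserted structure $\beta_1=\beta_1(\xi_1,t)$, $\beta_2=\beta_2(t)$ for $\xi_1\in\mathbb{R}$, $0\le t\le t_0$. I would emphasize that, in contrast with the torus case where the analogous conclusion followed from compactness and integration by parts, here the non-compactness of the cylinder in the $\xi_2$-direction is exactly why a genuine Liouville theorem --- rather than an energy identity --- is required.
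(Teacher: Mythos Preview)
Your argument is correct and follows essentially the same route as the paper: you show that $v^{2}=\beta_{2}$ is a bounded entire solution of the uniformly elliptic divergence-form equation (2.7) on $\mathbb{R}^{2}$ (using (2.8) for uniform ellipticity), invoke the Liouville theorem from De~Giorgi--Nash--Moser theory to conclude $\beta_{2}=\beta_{2}(t)$, and then read off $\partial_{2}\beta_{1}=0$ from the closedness relation $\partial_{1}\beta_{2}-\partial_{2}\beta_{1}=0$, which is exactly the condition $\nabla^{\perp}\cdot v=0$ the paper uses. The only cosmetic difference is that the paper cites the Liouville theorem directly (via Harnack, \cite[Corollary~3.12, Theorem~8.20]{GT}) whereas you spell out the equivalent H\"older-seminorm scaling argument.
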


%\vspace{5pt}

\begin{proof}
Since the diagonal matrix $B$ satisfies the elliptic condition by (2.8) and $v^{2}\in C^{1+\mu}$ is a bounded weak solution to (2.7) for $\xi\in \mathbb{R}^{2}$, applying the Liouville theorem \cite[Corollally 3.12, Theorem 8.20]{GT} implies that $v^{2}$ is constant. By $\nabla^{\perp}\cdot v=0$, $v^{1}$ is independent of $\xi_2$.
\end{proof}

%\vspace{5pt}

\begin{lem}
The solution $u$ is translationally or rotationally symmetric in some symmetric open set $U\subset \mathbb{R}^{3}$.
\end{lem}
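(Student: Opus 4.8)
```latex
The plan is to convert the local information about $\beta_1$ and $\beta_2$ established in Proposition 3.2 (and its torus analogue) back into a statement about the velocity field $u$ on the tubular neighborhood $U=\bigcup_{0\leq t\leq t_0}f^{-1}(c+t)$ of the regular level set. First I would recall the two defining relations of the pullback, namely $\beta_1=u(\Phi)\cdot\partial_1\Phi$ and $\beta_2=u(\Phi)\cdot\partial_2\Phi$, together with the transversality relation $u\cdot\partial_t\Phi=0$ coming from $u\cdot\nabla f=0$. Since the matrix $F=(\partial_1\Phi,\partial_2\Phi,\partial_t\Phi)$ is invertible on the regular level set, these three scalar equations determine $u(\Phi(\xi,t))$ uniquely from the data $(\beta_1,\beta_2,0)$ via $u=(\beta_1,\beta_2,0)F^{-1}$. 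The heart of the argument is therefore to feed the conclusions $\beta_2=\beta_2(t)$ and $\beta_1=\beta_1(\xi_1,t)$ into this expression and read off the symmetry.

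For case (i), $f=f(r,\theta)$ with cylindrical level sets, the parametrization is $\Phi(\xi,t)=re_r(\theta)+\xi_2 e_z$ with $(r,\theta)=(r(\xi_1,t),\theta(\xi_1,t))$, so that $\partial_2\Phi=e_z$ is the symmetric direction and the coefficients $\chi,\nu$ are independent of $\xi_2$. I would compute the three components $u\cdot e_r$, $u\cdot e_\theta$, $u\cdot e_z$ and check that each is independent of $\xi_2$. The component in the $e_z$ direction is governed by $\beta_2=u\cdot\partial_2\Phi=u^3$, which equals $\beta_2(t)$ and hence is constant along the level set; the remaining two in-surface components come from $\beta_1$ and the transversality condition, both of which involve only $(\xi_1,t)$ because $\beta_1=\beta_1(\xi_1,t)$ and $F$ depends on $\xi_2$ only through the trivial translation in $z$. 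Translating from the $(\xi_1,t)$ coordinates back to the ambient variables, the cylindrical symmetry $\partial_z u=0$ emerges: $u$ is a function of $(x_1,x_2)$ alone on $U$, which is exactly translational symmetry in the $e_3$-direction. An entirely parallel computation handles case (ii), $f=f(r,z)$ with $\partial_2\Phi=r e_\theta$ the rotational direction, yielding $\partial_\theta u=0$ and hence axisymmetry.

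The step I expect to require the most care is the bookkeeping of the $\xi_2$-dependence in $F^{-1}$. In case (i) the map $\xi_2\mapsto\Phi$ is a pure translation, so $\partial_1\Phi$ and $\partial_t\Phi$ are literally independent of $\xi_2$ and $F^{-1}$ inherits this; the argument is then immediate. In case (ii), however, $\xi_2=\theta$ enters $\Phi=r e_r(\theta)+z e_z$ through the rotating frame $e_r(\theta),e_\theta(\theta)$, so $F$ itself does depend on $\theta$ even though $\chi,\nu,p,q$ do not. The correct formulation of symmetry here is not that the Cartesian components of $u$ are $\theta$-independent, but that the \emph{cylindrical} components $u\cdot e_r$, $u\cdot e_\theta$, $u\cdot e_z$ are, i.e. $u=u^r(\xi_1,t)e_r+u^\theta(\xi_1,t)e_\theta+u^z(\xi_1,t)e_z$ with coefficients depending only on the profile variables. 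I would verify this by expressing $u^r=u\cdot e_r$, $u^\theta=u\cdot e_\theta$, $u^z=u\cdot e_z$ as linear combinations of $\beta_1,\beta_2$ with coefficients built from $\partial_1 r,\partial_1 z,\partial_t r,\partial_t z$ and $r$, all of which are $\theta$-independent, so that the frame rotation is the only $\theta$-dependence and it is absorbed into $e_r,e_\theta$. This confirms $\partial_\theta u=0$ in the sense of axisymmetry.

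Finally, to match the statement of the lemma I would record that $U$ is an open set foliated by the regular level sets $f^{-1}(c+t)$ for $0\leq t\leq t_0$, which is symmetric under the relevant one-parameter group (translations in $e_3$ for (i), rotations about the $z$-axis for (ii)) precisely because $f$ itself is; the symmetry of $u$ just established is then consistent with the symmetry of the ambient domain. The conclusion is that $u$ is translationally symmetric on $U$ in case (i) and rotationally symmetric on $U$ in case (ii). Global symmetry in $\mathbb{R}^3$, as promised in Theorem 1.2, will follow in the next step from unique continuation for the elliptic equation $-\Delta u=\nabla f\times u+f^2 u$, but that is beyond the present lemma.
```
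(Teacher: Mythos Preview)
Your proposal is correct and follows essentially the same route as the paper: recover $u(\Phi)$ from $(\beta_1,\beta_2,0)$ using the frame $F=(\partial_1\Phi,\partial_2\Phi,\partial_t\Phi)$, then check that in case (i) the resulting expression is independent of $\xi_2=z$ and in case (ii) the cylindrical components are independent of $\xi_2=\theta$. The one simplification you might borrow from the paper is that in both cases $\partial_1\Phi$ and $\partial_2\Phi$ are orthogonal (and both are orthogonal to $\partial_t\Phi$), so the inversion of $F$ collapses to $u(\Phi)=\beta_1\,\partial_1\Phi/|\partial_1\Phi|^2+\beta_2\,\partial_2\Phi/|\partial_2\Phi|^2$, which makes the $\xi_2$-independence immediate without any further bookkeeping.
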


%\vspace{5pt}

\begin{proof}
In both cases (i) and (ii), $\partial_{1}\Phi$ and $\partial_{2}\Phi$ are orthogonal. Thus $u(\Phi)=u(\Phi(\xi,t))$ satisfies 

\begin{align*}
u(\Phi)
&=(u(\Phi)\cdot \partial_{1}\Phi) \frac{\partial_{1}\Phi}{|\partial_{1}\Phi|^{2}}+(u(\Phi)\cdot \partial_{2}\Phi) \frac{\partial_{2}\Phi}{|\partial_{2}\Phi|^{2}} \\
&=\beta_{1}(\xi_1,t)\frac{\partial_{1}\Phi}{|\partial_{1}\Phi|^{2}}
+\beta_{2}(t)\frac{\partial_{2}\Phi}{|\partial_{2}\Phi|^{2}}.
\end{align*}\\

In the case (i), by differentiating $\Phi(\xi,t)=r(\xi_1,t)e_{r}(\theta(\xi_1,t))+\xi_2e_z$ by $\xi_1$ and $\xi_2$, 

\begin{align*}
u(\Phi)=\frac{\beta_{1}(\xi_1,t)}{|\partial_1r|^{2}+r^{2}|\partial_1\theta|^{2}}(\partial_1r e_r+r\partial_1\theta e_{\theta})+\beta_{2}(t)e_{z}.
\end{align*}\\
The right-hand side is independent of $\xi_2=z$. Thus $u$ is translationally symmetric on the level set $f^{-1}(c+t)$ for $0\leq t\leq t_0$. In particular, $u$ is translationally symmetric in $U=D\times \mathbb{R}$ for some open set $D$ in a plane. 

In the case (ii), by differentiating $\Phi(\xi,t)=r(\xi_1,t)e_{r}(\xi_2)+z(\xi_1,t)e_z$ by $\xi_1$ and $\xi_2$, 

\begin{align*}
u(\Phi)
&=\frac{\beta_{1}(\xi_1,t)}{|\partial_1r|^{2}+|\partial_1z|^{2}}(\partial_1r e_r+\partial_1z e_{z})+\frac{\beta_{2}(t)}{r}e_{\theta}.
\end{align*}\\
Each components in the cylindrical coordinate are independent of $\xi_2=\theta$. Thus $u$ is rotationally symmetric on the level set $f^{-1}(c+t)$ for $0\leq t\leq t_0$. In particular, $u$ is rotationally symmetric in a region $U$ rotated some open set in the $(r,z)$-plane around the $z$-axis.
\end{proof}

\vspace{5pt}

\subsection{Unique continuation}

The local symmetry implies the global symmetry by unique continuation. We use a classical unique continuation result under the boundedness of $|\Delta w|/|w|$, e.g. \cite{Wolff93}.

%\vspace{5pt}

\begin{prop}
Let $w\in C^{2}(\mathbb{R}^{3})$ satisfy 

\begin{align*}
|\Delta w|\leq C_R|w|\quad \textrm{in}\ \{|x|<R\}, 
\end{align*}\\
for each $R>0$ with some $C_R>0$. Assume that $w$ vanishes in some open set in $\{|x|<R\}$. Then, $w\equiv 0$.
\end{prop}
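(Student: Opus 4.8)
The plan is to import the classical unique continuation property (UCP) for the Laplacian with a locally bounded potential and to globalize it using connectedness of $\mathbb{R}^{3}$. The hypothesis provides, on each ball $B_R=\{|x|<R\}$, a pointwise differential inequality $|\Delta w|\leq C_R|w|$ with $C_R<\infty$; thus the potential governing $w$ is bounded on every compact subset of $\mathbb{R}^{3}$, in particular it lies in $L^{3/2}_{\mathrm{loc}}$. The essential analytic input, which I would cite rather than reprove, is the strong UCP for such $w$: on a connected open set, if $w$ vanishes on a nonempty open subset, then $w$ vanishes identically. For bounded potentials this is classical and follows from Carleman estimates, see \cite{Wolff93}.

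Granting this input, the remaining argument is purely topological. Let $Z\subset\mathbb{R}^{3}$ be the interior of the zero set $\{w=0\}$, i.e. the set of points having an open neighborhood on which $w$ vanishes. By hypothesis $Z$ is nonempty, and it is open by definition. To see that $Z$ is closed, take $x_0\in\overline{Z}$ and fix $R>|x_0|+1$, so that $w$ satisfies $|\Delta w|\leq C_R|w|$ on the connected ball $B_R$, which contains $x_0$. Since $x_0$ is a limit of points of $Z$, the open set $Z\cap B_R$ is nonempty and $w$ vanishes there. Applying the UCP on the connected domain $B_R$ forces $w\equiv 0$ on $B_R$, so in particular $w$ vanishes in a neighborhood of $x_0$ and $x_0\in Z$. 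Thus $Z$ is a nonempty, open, and closed subset of the connected space $\mathbb{R}^{3}$, whence $Z=\mathbb{R}^{3}$ and $w\equiv 0$.

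The main obstacle is concentrated entirely in the cited analytic input, namely the Carleman estimate underlying the UCP; the rest is elementary. Two points deserve a remark. First, although the constant $C_R$ may grow as $R\to\infty$, this is harmless: unique continuation is a local statement applied on each fixed ball $B_R$, where the potential is genuinely bounded, and global connectedness is used only to patch the local conclusions, requiring no quantitative control in $R$. Second, when $w$ is vector-valued the inequality $|\Delta w|\leq C_R|w|$ couples the components, but summing the scalar Carleman estimate over the components (the coupling being absorbed into the right-hand side, since $|\Delta w_i|\leq C_R|w|$ for each $i$) yields the UCP for the system, so the vector-valued case reduces to the scalar one.
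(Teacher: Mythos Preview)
Your proposal is correct and aligns with the paper's treatment: the paper does not prove this proposition but simply records it as a classical unique continuation result, citing \cite{Wolff93}, exactly as you do. Your additional open--closed connectedness argument to pass from local UCP on balls to the global conclusion, and your remark reducing the vector-valued case to the scalar one by summing Carleman estimates componentwise, are standard and correct elaborations that the paper leaves implicit.
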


\begin{proof}[Proof of Theorem 1.2]
For translationally symmetric $u$ in $U$, set $w(x)=u(x)-u(x+\tau e_{z})$ for $\tau\in \mathbb{R}$. Then, $w$ is a Beltrami field with $f$ and vanishes in $U$. Since $-\Delta w=\nabla f\times w+f^{2}w$ in $\mathbb{R}^{3}$, by unique continuation, $w\equiv 0$ in $\mathbb{R}^{3}$. Thus $u$ is translationally symmetric in $\mathbb{R}^{3}$, i.e. $u=u(x_1,x_2)$. 

Similarly, for rotationally symmetric $u$ in $U$, set $w(x)=u(x)-{}^{t}R_{\tau}u(R_{\tau}x)$ with $R_{\tau}=(e_{r}(\tau), e_{\theta}(\tau), e_{z} )$ for $\tau\in [0,2\pi]$. Then, applying unique continuation to $w$ implies that $u$ is rotationally symmetric in $\mathbb{R}^{3}$, i.e. $u=u^{r}(r,z)e_r(\theta)+u^{\theta}(r,z)e_{\theta}(\theta)+u^{z}(r,z)e_z$. This completes the proof.
\end{proof}

\vspace{5pt}

\begin{rems}
(i) Under the translational symmetry, (1.2) is reduced to 

\begin{align*}
\partial_2u^{3}=fu^{1},\quad 
-\partial_1u^{3}=fu^{2},\quad 
\partial_1u^{2}-\partial_2u^{1}=fu^{3},\quad 
\partial_1u^{1}+\partial_2u^{2}=0.
\end{align*}\\
With a stream function $\Psi$, ${}^{t}(u^{1},u^{2})=\nabla^{\perp}\Psi$ and ${}^{t}(u^{1},u^{2})\cdot \nabla u^{3}=0$. Hence $u^{3}=u^{3}(\Psi)$, $f=\dot{u}^{3}(\Psi)$ and $\Psi$ is a solution to $-\Delta \Psi=\dot{u}^{3}(\Psi){u}^{3}(\Psi)$.

\noindent
(ii) Under the rotational symmetry, (1.2) is reduced to 

\begin{align*}
-\partial_zu^{\theta}=fu^{r},\quad \partial_zu^{r}-\partial_ru^{z}=fu^{\theta},\quad \partial_r u^{\theta}+u^{\theta}/r=fu^{z},\quad \partial_ru^{r}+u^{r}/r+\partial_z u^{z}=0.
\end{align*}\\
With a stream function $\Psi$, $ru^{z}=\partial_r\Psi$, $ru^{r}=-\partial_z\Psi$ and ${}^{t}(ru^{z},ru^{r})\cdot \nabla_{z,r}\Gamma=0$ for $\Gamma=ru^{\theta}$. Hence $\Gamma=\Gamma(\Psi)$, $f=\dot{\Gamma}(\Psi)$ and $\Psi$ is a solution to $-(\Delta_{z,r}-r^{-1}\partial_r)\Psi=\dot{\Gamma}(\Psi)\Gamma(\Psi)$. 
\end{rems}

\section{Examples of symmetric solutions}

\vspace{5pt}

We review existence of translationally and rotationally symmetric solutions to (1.2).

\subsection{Vortex pairs}

Translationally symmetric solutions can be constructed by the elliptic problem for given $u^{3}(\cdot)$,

\begin{align*}
-\Delta \Psi=\dot{u}^{3}(\Psi)u^{3}(\Psi)\quad \textrm{in}\ \mathbb{R}^{2}.   
\end{align*}\\
The simplest solutions are rotationally symmetric solutions, i.e. $\Psi=\Psi(r)$. For such $\Psi$, level sets of $f$ are cylinders, i.e. $f=f(r)$. If $\dot{u}^{3}(\Psi)u^{3}(\Psi)$ is compactly supported, the Biot-Savart law implies the decay ${}^{t}(u^{1},u^{2})=O(r^{-1})$ as $r\to\infty$, cf. \cite{Na14}, \cite{CC15}. Besides rotationally symmetric solutions, there exist periodic solutions for $\dot{u}^{3}(t)u^{3}(t)=t$ or $e^{t}$. For such solutions, level sets of $f$ are deformed cylinders in $\mathbb{R}^{3}$ and ${}^{t}(u^{1},u^{2})$ is merely bounded, e.g. \cite[2.2.2]{MaB}.

Variational solutions also exist. A vortex pair is a pair of translating 2 vortices with opposite signs in $\mathbb{R}^{2}$. They are symmetric for the $x_2$-variable and constructed via the half plane problem:

\begin{equation*}
\begin{aligned}
-\Delta \Psi&=\dot{u}^{3}(\Psi){u}^{3}(\Psi)\quad \textrm{in}\ \mathbb{R}^{2}_{+},\\
\Psi&=-\gamma\hspace{49pt} \textrm{on}\ \partial\mathbb{R}^{2}_{+}, \\
\partial_1\Psi\to0,\quad \partial_2\Psi&\to -W\hspace{42pt} \textrm{as}\ x_1^{2}+x_2^{2}\to\infty.
\end{aligned}
\end{equation*}\\
The constant $W>0$ is a speed of a vortex and $\gamma\geq 0$ is a flux measuring a distance from a vortex to the boundary $x_2=0$. A typical choice is $u^{3}(t)=t_{+}^{l}$ for $l>1$ and $t_{+}=\max\{t,0\}$. For such $u^{3}$, variational solutions exist and their vortex is compactly supported in $\mathbb{R}^{2}$ \cite{Yang91}. Level sets of $f$ are two symmetric deformed cylinders in $\mathbb{R}^{3}$ and the decay is ${}^{t}(u^{1},u^{2})=\textrm{const.}+O(r^{-1})$ as $r\to\infty$.

\subsection{Vortex rings}

Rotationally symmetric solutions can be constructed via the elliptic problem for given $\Gamma(\cdot)$: 

\begin{equation*}
\begin{aligned}
-(\Delta_{z,r}-r^{-1}\partial_r)\Psi&=\dot{\Gamma}(\Psi)\Gamma(\Psi)\quad \textrm{in}\ \mathbb{R}^{2}_{+},\\
\Psi&=-\gamma\hspace{41pt} \textrm{on}\ \partial\mathbb{R}^{2}_{+}, \\
r^{-1}\partial_z\Psi\to0,\quad r^{-1}\partial_r\Psi&\to -W\hspace{34pt} \textrm{as}\ z^{2}+r^{2}\to\infty.
\end{aligned}
\tag{4.3}
\end{equation*}\\
For the choice $\Gamma(s)=s_+^{l}$ and $l>1$, variational solutions exist and their vortex is compactly supported in $\mathbb{R}^{3}$ \cite{A8}. Level sets of $f$ are tori in $\mathbb{R}^{3}$ and the decay is $u=\textrm{const.}+O(|x|^{-3})$ as $|x|\to\infty$.

%\vspace{5pt}

\section*{Acknowledgements}
The author is grateful to Professor Daniel Peralta-Salas for many helpful comments on topology of level sets for Beltrami fields with a non-constant proportionality factor. This work is partially supported by JSPS through the Grant-in-aid for Young Scientist 20K14347, Scientific Research (B) 17H02853 and MEXT Promotion of Distinctive Joint Research Center Program Grant Number JPMXP0619217849.\\

%ref
\bibliographystyle{alpha}
\bibliography{ref}

\end{document}